\definecolor{Red}{cmyk}{0,1,1,0}
\definecolor{verde}{cmyk}{1,0,1,0}
\definecolor{loka}{cmyk}{.5,0,1,.5}
\definecolor{azul}{cmyk}{1,1,0,0}
\numberwithin{equation}{section}
\newcommand{\be}{\begin{equation}}
\newcommand{\ee}{\end{equation}}
\newtheorem{definition}{Definition}
\newtheorem{corolario}{Corollary}
\newtheorem{teorema}{Theorem}
\newtheorem{remark}{Remark}
\begin{document}
\title{Stability of fractional functional differential equations}
\author{J. Vanterler da C. Sousa}
\address{ Department of Applied Mathematics, Institute of Mathematics,
 Statistics and Scientific Computation, University of Campinas --
UNICAMP, rua S\'ergio Buarque de Holanda 651,
13083--859, Campinas SP, Brazil\newline
e-mail: {\itshape \texttt{vanterlermatematico@hotmail.com, capelas@ime.unicamp.br }}}

\address{$^1$ Departamento de Matem\'aticas, Universidad de la Serena, Benavente 980, La Serena, Chile \newline
e-mail: {\itshape \texttt{fabio.granrod@gmail.com}}}
\author{E. Capelas de Oliveira}
\author{F. G. Rodrigues$^1$}

\begin{abstract} In this paper, we present a study on the Ulam-Hyers and Ulam-Hyers-Rassias stabilities of the solution of the fractional functional differential equation using the Banach fixed point theorem.

\vskip.5cm
\noindent
\emph{Keywords}: $\psi$-Hilfer fractional derivative, Ulam-Hyers stability, Ulam-Hyers-Rassias stability, fractional functional differential equations, Banach fixed point theorem.
\newline 
MSC 2010 subject classifications. 26A33, 34A08, 34K37, 34K20.
\end{abstract}
\maketitle

%%%%%%%%%%%%%%%%%%%%%%%%%%%%%%%%%%%%%%%%%%%%%%%%%%%%%%%%%%%%%%%%%%%%%%%%%%%%%%%%%%%%%%%%%%%%%%%%%%%%%%%%%%%%%%%%%%%%%%%%%%%%%%%%%%%%%%%%%%%%%%%%%%%%%%%%%%%%%
\section{Introduction}

An exchange of questions and answers between Ulam and Hyers, the research on
stability of solutions of functional differential equations was started
several years ago \cite{hyers,ulam}. More precisely, Ulam raised the following
question: Let $H_{1}$ and $H_{2}$ be a group and a metric group endowed with
the metric $d(\cdot,\cdot)$, respectively. Given $\varepsilon>0$, does there
exists a $\delta>0$ such that if the function $f:H_{1}\rightarrow H_{2}$
satisfies the following inequality $d(f(x,y),f(x)f(y))<\delta$, $\forall
x,y\in H_{1}$, then there exists a homeomorphism $F:H_{1}\rightarrow H_{2}$
with $d(f(x),F(x))<\varepsilon$, $\forall x\in H_{1}$? And so Hyers, presents
his answer, in the case where $H_{1}$ and $H_{2}$ are Banach spaces
\cite{ulam}. Since then, a significant number of researchers have devoted
themselves to developing their research which address stability and many
important works have been published not only on functional differential
equations, but also other types of equations \cite{S1,S2,S3,S4,S5}.

On the other hand, with the expansion of the fractional calculus and the
number of researchers investigating more and more problems involving the
stability of solutions of fractional functional differential equations
specially in Banach spaces, this field started to gain more attention
\cite{ista1,ista2,ista3,ista4}. In addition, not only stability has been the
subject of study, but investigating the existence and uniqueness, as well as
the controllability of solutions of fractional differential equations, has
called, and still call, a lot of attention over the years
\cite{exis1,exis2,exis3,exis4,cont1,cont2,cont3}.

In 2012 Zhao et al. \cite{exis5}, investigated the existence of positive
solutions of the fractional functional differential equation introduced by
means of the Caputo fractional derivative and using the Krasnosel'skii fixed
point theorem. In this paper, the results obtained on the existence of
positive solutions for the fractional functional differential equation improve
and generalize the existing results. There are numerous works on the existence
and uniqueness of fractional functional differential equations, both locally
and globally in the Hilbert, Banach and Fr\'echet spaces. For better reading
we suggest the works \cite{exis6,exis7,exis8,exis9}.

In the middle of 2017, Abbas et al. \cite{ista4}, investigated the existence
of Ulam-Hyers and Ulam-Hyers-Rassias stabilities of the random solution of the
fractional functional differential equation of the Hilfer and Hilfer-Hadamard
type by means of fixed point theorems. Abbas et al. \cite{ista1}, also
investigated the Ulam stability of functional partial differential equations
through Picard's operator theory and provided some examples. Further work on
stability of fractional functional differential equations and even functional
integral equation can be found in the following works
\cite{ist1,ist2,ist3,ist4}. The stability study is broad and there are other
types of stability in which we will not discuss in this paper, but in the
paper of Stamova and Stamov \cite{exis10}, they perform a system stability
analysis of fractional functional differential equations using the Lyapunov
method and the principle of comparison.

Since the theory about the Ulam-Hyers stability of functional differential
equations is in wide growth, and the number of papers is yet small, one of the
objectives for the realization of this paper is to provide an investigation of
the fractional differential equation Eq.(\ref{eq1}), in order to be a good
research material in this matter.

Consider the delay fractional differential equation of the form
\begin{equation}
\label{eq1}^{H}\mathbb{D}_{t_{0}+}^{\alpha,\beta;\psi}y\left(  t\right)
=F\left(  t,y\left(  t\right)  ,y\left(  t-a\right)  \right)
\end{equation}
where $^{H}\mathbb{D}_{t_{0}+}^{\alpha,\beta;\psi}\left(  \cdot\right) $ is
the $\psi$-Hilfer fractional derivative with $0<\alpha\leq1$, $0\leq\beta
\leq1,$ $F:\mathbb{R}^{3}\rightarrow\mathbb{R} $ is a bounded and continuous
function, $a>0$ is a real constant and $t>a$.

Motivated by the works \cite{mot1,mot2,mot3}, in this paper, we have as main
purpose to investigate the Ulam-Hyers and Ulam-Hyers-Rassias stabilities of
the fractional functional differential equation Eq.(\ref{eq1}) by means of
Banach fixed point theorem.

This paper is organized as follows: in Section 2, we present as preliminaries
the continuous functions and the weighted function space, in order to
introduce the $\psi$- Riemann-Liouville fractional integral and the $\psi
$-Hilfer fractional derivative. In this sense, we present the concepts of
Ulam-Hyers and Ulam-Hyers-Rassias stabilities, as well as Banach fixed point
theorem, which is fundamental for obtaining the main results. In Section 3, we
present the first result of this paper, the Ulam-Hyers-Rassias stability by
means of Banach fixed point theorem. In Section 4, again by means of Banach
fixed point theorem, we present the second result of this paper, the
Ulam-Hyers stability. Concluding remarks close the paper.
%%%%%%%%%%%%%%%%%%%%%%%%%%%%%%%%%%%%%%%%%%%%%%%%%%%%%%%%%%%%%%%%%%%%%%%%%%%%%%%%%%%%%%%%%%%%%%%%%%%%%%%%%%%%%%%%%%%%%%%%%%%%%%%%%%%%%%%%%%%%%%%%%%%%%%%%%%%%%%%%%%%%%%%%%%%%%%%%%%%%%%%%%%%%%%%%%%%%%%%%%%%%%%%%%%%%%%%%%%%%%%%%

\section{Preliminaries}

In this section we present some important concepts that will be useful to
write our mains results. First, we present the definitions of the $\psi
$-Riemann-Liouville fractional integral and the $\psi$-Hilfer fractional
derivative. In this sense, we present the Ulam-Hyers, Ulam-Hyers-Rassias and
generalized Ulam-Hyers-Rassias stabilities concepts for the $\psi$-Hilfer
fractional derivative. We conclude the section with Banach fixed point
theorem, an important result to obtain the stability of the fractional
functional differential equation.

Let $[a,b]$ $(0<a<b<\infty)$ be a finite interval on the half-axis
$\mathbb{R}^{+}$ and $C[a,b]$, $AC^{n}[a,b]$, $C^{n}[a,b]$ be the spaces of
continuous functions, $n$-times absolutely continuous functions, $n$-times
continuously differentiable functions on $[a,b]$, respectively.

The space of the continuous functions $f$ on $[a,b]$ with the norm is defined
by \cite{ZE1}
\[
\left\Vert f\right\Vert _{C\left[  a,b\right]  }=\underset{t\in\left[  a,b
\right]  }{\max}\left\vert f\left(  t\right)  \right\vert .
\]

On the order hand, we have $n$-times absolutely continuous functions given by
\[
AC^{n}\left[  a,b\right]  =\left\{  f:\left[  a,b\right]  \rightarrow
\mathbb{R} ;\text{ }f^{\left(  n-1\right)  }\in AC\left(  \left[  a,b\right]
\right)  \right\}  .
\]

The weighted space $C_{\gamma,\psi}[a,b]$ of functions $f$ on $(a,b]$ is
defined by \cite{ZE1}
\[
C_{\gamma;\psi}\left[  a,b\right]  =\left\{  f:\left(  a,b\right]
\rightarrow\mathbb{R};\text{ }\left(  \psi\left(  t\right)  -\psi\left(
a\right)  \right)  ^{\gamma}f\left(  t\right)  \in C\left[  a,b\right]
\right\}  ,\text{ }0\leq\gamma<1
\]
with the norm
\[
\left\Vert f\right\Vert _{C_{\gamma;\psi}\left[  a,b\right]  }=\left\Vert
\left(  \psi\left(  t\right)  -\psi\left(  a\right)  \right)  ^{\gamma
}f\left(  t\right)  \right\Vert _{C\left[  a,b\right]  }=\underset{t\in\left[
a,b\right]  }{\max}\left\vert \left(  \psi\left(  t\right)  -\psi\left(
a\right)  \right)  ^{\gamma}f\left(  t\right)  \right\vert .
\]

The weighted space $C_{\gamma;\psi}^{n}\left[  a,b\right] $ of function $f$ on
$(a,b]$ is defined by \cite{ZE1}
\[
C_{\gamma;\psi}^{n}\left[  a,b\right]  =\left\{  f:\left(  a,b\right]
\rightarrow\mathbb{R};\text{ }f\left(  t\right)  \in C^{n-1}\left[
a,b\right]  ;\text{ }f^{\left(  n\right)  }\left(  t\right)  \in
C_{\gamma;\psi}\left[  a,b\right]  \right\}  ,\text{ }0\leq\gamma<1
\]
with the norm
\[
\left\Vert f\right\Vert _{C_{\gamma;\psi}^{n}\left[  a,b\right]
}=\overset{n-1}{\underset{k=0}{\sum}}\left\Vert f^{\left(  k\right)
}\right\Vert _{C\left[  a,b\right]  }+\left\Vert f^{\left(  n\right)
}\right\Vert _{C_{\gamma;\psi}\left[  a,b\right]  }.
\]

For $n=0$, we have, $C_{\gamma,\psi}^{0}\left[  a,b\right]  =C_{\gamma,\psi
}\left[  a,b\right]  $.

The weighted space $C^{\alpha,\beta}_{\gamma,\psi}[a,b]$ is defined by
\[
C_{\gamma;\psi}^{\alpha,\beta}\left[  a,b\right]  =\left\{  f\in
C_{\gamma;\psi}\left[  a,b\right]  ;\text{ }^{H}\mathbb{D}_{a+}^{\alpha
,\beta;\psi}f\in C_{\gamma;\psi}\left[  a,b\right]  \right\}  ,\text{ }%
\gamma=\alpha+\beta\left(  1-\alpha\right)  .
\]

Let $\alpha>0$, $\left[  a,b\right]  $ and $\psi\left(  t\right)  $ be an
increasing and positive monotone function on $\left(  a,b\right]  $, having a
continuous derivative $\psi^{\prime}\left(  t\right)  $ on $\left[  a,b\right]
$. The Riemann-Liouville fractional integral with respect to another function
$\psi$ on $\left[  a,b\right]  $ is defined by \cite{ZE1}
\begin{equation}
\label{eq2}I_{t_{0}+}^{\alpha;\psi}y\left(  t\right)  :=\frac{1}{\Gamma\left(
\alpha\right)  }\int_{t_{0}}^{t}N^{\alpha}_{\psi}(t,s)y\left(  s\right)  ds
\end{equation}
where $\Gamma\left(  \cdot\right)  $ is the gamma function with $0<\alpha
\leq1$ and $N^{\alpha}_{\psi}(t,s):=\psi^{\prime\alpha{-1}}$. The $\psi
$-Riemann-Liouville fractional integral on the left is defined in an analogous way.

On the other hand, left $n-1<\alpha\leq n$ with $n\in\mathbb{N}$, $J=\left[
a,b\right]  $ be an interval such that $-\infty\leq a<b\leq+\infty$ and let
$f,\psi\in C^{n}\left(  \left[  a,b\right] ,\mathbb{R} \right)  $ be two
functions such that $\psi$ is increasing and $\psi^{\prime}\left(  t\right)
\neq0$, for all $t\in J.$ The $\psi$-Hilfer fractional derivative is given by
\cite{ZE1}
\[
^{H}\mathbb{D}_{t_{0}+}^{\alpha,\beta;\psi}y\left(  t\right)  =I_{t_{0}%
+}^{\beta\left(  n-\alpha\right)  ;\psi}\left(  \frac{1}{\psi^{\prime}\left(
t\right)  }\frac{d}{dt}\right)  ^{n}I_{t_{0}+}^{\left(  1-\beta\right)
\left(  n-\alpha\right)  ;\psi}y\left(  t\right)  .
\]
The $\psi$-Hilfer fractional derivative on the left is defined in an analogous way.

Let $X$ be a nonempty set. A function $d:X\times X\rightarrow\left[
0,\infty\right]  $ is called generalized metric on $X$ if, and only if, $d$
satisfies \cite{principal}:

\begin{enumerate}
\item $d\left(  x,y\right)  =0$ if $x=y;$

\item $d\left(  x,y\right)  =d\left(  y,x\right)  $, for all $x,y\in X;$

\item $d\left(  x,z\right)  \leq d\left(  x,y\right)  +d\left(  y,z\right)  ,$
for all $x,y,z\in X.$
\end{enumerate}

For the study of Ulam-Hyers, Ulam-Hyers-Rassias and generalized
Ulam-Hyers-Rassias stabilities, we will adapt such definitions
\cite{mot1,mot2,principal}.

\begin{definition}
For some $\varepsilon\geq0$, $\Phi\in C_{1-\gamma;\psi}\left[  t_{0}-a,t_{0}
\right]  $ and $t_{0},T\in\mathbb{R}$ with $T>t_{0}$, assume that for any
continuous function $f:\left[  t_{0}-a,T\right]  \rightarrow\mathbb{R}$
satisfying
\[
\left\{
\begin{array}
[c]{cll}%
\left\vert ^{H}\mathbb{D}_{t_{0}+}^{\alpha,\beta;\psi}f\left(  t\right)
-F\left(  t,f\left(  t\right)  ,f\left(  t-a\right)  \right)  \right\vert  &
< & \varepsilon,\text{ }t\in\left[  t_{0},T\right] \\
\left\vert f\left(  t\right)  -\Phi\left(  t\right)  \right\vert  & < &
\varepsilon,\text{ }t\in\left[  t_{0}-a,t_{0}\right] ,
\end{array}
\right.
\]
there exists a continuous function $f_{0}:\left[  t_{0}-a,T\right]
\rightarrow\mathbb{R}$ satisfying:
\[
\left\{
\begin{array}
[c]{cll}%
^{H}\mathbb{D}_{t_{0}+}^{\alpha,\beta;\psi}f\left(  t\right)  & = & F\left(
t,f\left(  t\right)  ,f\left(  t-a\right)  \right)  ,\text{ }t\in\left[
t_{0},T\right] \\
f\left(  t\right)  & = & \Phi\left(  t\right)  ,\text{ }t\in\left[
t_{0}-a,t_{0}\right]
\end{array}
\right.
\]
and
\[
\left\vert f\left(  t\right)  -f_{0}\left(  t\right)  \right\vert \leq
K\left(  \varepsilon\right)  ,\text{ }t\in\left[  t_{0}-a,T\right]
\]
where $K\left(  \varepsilon\right)  $ depending of $\varepsilon$ only. Then,
we say that the solution of \textrm{Eq.(\ref{eq1})} is Ulam-Hyers stable.
\end{definition}

\begin{definition}
For some $\varepsilon\geq0$, $\Phi\in C_{1-\gamma;\psi}\left[  t_{0}%
-a,t_{0}\right]  $ and $t_{0},T\in\mathbb{R}$ with $T>t_{0}$, assume that for
any continuous function $f:\left[  t_{0}-a,T\right]  \rightarrow\mathbb{R}$
satisfying
\[
\left\{
\begin{array}
[c]{cll}%
\left\vert ^{H}\mathbb{D}_{t_{0}+}^{\alpha,\beta;\psi}f\left(  t\right)
-F\left(  t,f\left(  t\right)  ,f\left(  t-a\right)  \right)  \right\vert  &
< & \varphi,\text{ }t\in\left[  t_{0},T\right] \\
\left\vert f\left(  t\right)  -\Phi\left(  t\right)  \right\vert  & < &
\varphi,\text{ }t\in\left[  t_{0}-a,t_{0}\right] ,
\end{array}
\right.
\]
there exists a continuous function $f_{0}:[t_{0}-a,T]\rightarrow\mathbb{R}$
satisfying
\[
\left\{
\begin{array}
[c]{cll}%
^{H}\mathbb{D}_{t_{0}+}^{\alpha,\beta;\psi}f_{0}\left(  t\right)  & = &
F\left( t,f_{0}\left(  t\right)  ,f_{0}\left(  t-a\right)  \right)  ,\text{
}t\in\left[ t_{0},T\right] \\
f_{0}\left(  t\right)  & = & \Phi\left(  t\right)  ,\text{ }t\in\left[
t_{0}-a,t_{0}\right]
\end{array}
\right.
\]
and
\[
\left\vert f\left(  t\right)  -f_{0}\left(  t\right)  \right\vert \leq\Phi
_{1},\text{ }t\in\left[  t_{0}-a,T\right]
\]
where $K\left(  \varepsilon\right)  $ depending of $\varepsilon$ only. Then,
we say that the solution of \textrm{Eq.(\ref{eq1})} is the Ulam-Hyers-Rassias stable.
\end{definition}

\begin{definition}
\textrm{Eq.(\ref{eq1})} is generalized Ulam-Hyers stable with respect to
$\phi$ if there exists $c_{\phi}>0$ such that for each solution $y\in
C_{1-\gamma;\psi}^{1}\left(  \left[  t_{0}-a,T\right] , \mathbb{R}\right)  $
to
\[
\left\vert ^{H}\mathbb{D}_{t_{0}+}^{\alpha,\beta;\psi}y\left(  t\right)
-F\left(  t,y\left(  t\right)  ,y\left(  t-a\right)  \right)  \right\vert
\leq\phi\left(  t\right)  ,\text{ }t\in\left[  t_{0}-a,T\right]
\]
there exists a solution $x\in C_{1-\gamma;\psi}^{1}\left(  \left[
t_{0}-a,T\right] , \mathbb{R}\right)  $ of \textrm{Eq.(\ref{eq1})} with
\[
\left\vert y\left(  t\right)  -x\left(  t\right)  \right\vert \leq c_{\phi
}\phi\left(  t\right)  ,\text{ }t\in\left[  t_{0}-a,T\right]  .
\]

\end{definition}

The following is the result of the Banach fixed point theorem, however its
proof will be omitted.

\begin{teorema}
\label{teo1} \textrm{\cite{bana}} Let $\left(  X,d\right)  $ be a generalized
complete metric space. Assume that $\Omega:X\rightarrow X$ is a strictly
contractive operator with the Lipschitz constant $L<1.$ If there exists a
nonnegative integer $k$ such that $d\left(  \Omega^{k+1}x,\Omega^{k}x\right)
<\infty$ for some $x\in X$, then the following are true:

\begin{enumerate}
\item The sequence $\left\{  \Omega^{n}x\right\}  $ converges to a fixed
$x^{\ast}$ of $\Omega$;

\item $x^{\ast}$ is the unique fixed point of $\Omega$ in
\[
X^{\ast}=\left\{  y\in X:d\left(  \Omega^{k}x,y\right)  <\infty\right\}  .
\]

\item If $y\in X^{\ast}$, then
\[
d\left(  y,X^{\ast}\right)  \leq\dfrac{1}{1-L}d\left(  \Omega y,y\right)  .
\]

\end{enumerate}
\end{teorema}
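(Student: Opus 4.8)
The plan is to follow the classical contraction-mapping argument, but with careful bookkeeping of finiteness since $d$ may take the value $+\infty$. First I would normalize by setting $x_{0}:=\Omega^{k}x$, so that the hypothesis becomes $d(\Omega x_{0},x_{0})<\infty$. Iterating the contraction property gives $d(\Omega^{n+1}x_{0},\Omega^{n}x_{0})\le L^{n}\,d(\Omega x_{0},x_{0})$, so consecutive distances decay geometrically and remain finite.

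Next I would show $\{\Omega^{n}x_{0}\}$ is Cauchy. For $m>n$, the triangle inequality together with the geometric bound yields
\[
d(\Omega^{m}x_{0},\Omega^{n}x_{0})\le\sum_{j=n}^{m-1}L^{j}\,d(\Omega x_{0},x_{0})\le\frac{L^{n}}{1-L}\,d(\Omega x_{0},x_{0}),
\]
which is finite and tends to $0$ as $n\to\infty$. By completeness the sequence converges to some $x^{*}\in X$; since $\{\Omega^{n}x_{0}\}$ is a tail of $\{\Omega^{n}x\}$, the latter converges to the same limit, which gives conclusion (1) once $x^{*}$ is shown to be fixed. For that I would estimate $d(\Omega x^{*},x^{*})\le L\,d(x^{*},\Omega^{n}x_{0})+d(\Omega^{n+1}x_{0},x^{*})$ and let $n\to\infty$ to conclude $d(\Omega x^{*},x^{*})=0$, hence $\Omega x^{*}=x^{*}$.

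For conclusion (2) I first note that $x^{*}\in X^{*}$: letting $m\to\infty$ in the displayed estimate with $n=0$ gives $d(\Omega^{k}x,x^{*})=d(x_{0},x^{*})\le\frac{1}{1-L}\,d(\Omega x_{0},x_{0})<\infty$. If $y,z\in X^{*}$ are both fixed points, then $d(y,z)\le d(y,\Omega^{k}x)+d(\Omega^{k}x,z)<\infty$, while contractivity forces $d(y,z)=d(\Omega y,\Omega z)\le L\,d(y,z)$; since $L<1$ and $d(y,z)$ is finite, this yields $d(y,z)=0$, so $y=z$, proving uniqueness within $X^{*}$.

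Finally, for the error estimate (3), given $y\in X^{*}$ I would run the same argument starting from $y$: the iterates $\Omega^{n}y$ form a Cauchy sequence converging to a fixed point that lies in $X^{*}$ and hence equals $x^{*}$ by uniqueness. Telescoping gives $d(y,\Omega^{n}y)\le\sum_{j=0}^{n-1}L^{j}\,d(\Omega y,y)\le\frac{1}{1-L}\,d(\Omega y,y)$ uniformly in $n$, and passing to the limit through $d(y,x^{*})\le d(y,\Omega^{n}y)+d(\Omega^{n}y,x^{*})$ delivers the stated bound. The main obstacle is not any single computation but the bookkeeping forced by the extended metric: every application of the triangle inequality must be arranged so that only already-known finite distances appear, which is precisely the role of the hypothesis $d(\Omega^{k+1}x,\Omega^{k}x)<\infty$ and of the restriction to $X^{*}$.
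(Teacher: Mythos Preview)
Your argument is correct and is precisely the classical Diaz--Margolis proof: reduce to $x_{0}=\Omega^{k}x$, use the geometric decay of consecutive distances to get a Cauchy sequence, identify the limit as a fixed point, and then obtain uniqueness and the a posteriori bound by restricting to the set $X^{*}$ where all relevant distances are finite. The paper itself does not prove this theorem; it simply cites the reference \texttt{[bana]} and states that the proof is omitted, so there is nothing to compare against beyond noting that your proof is the standard one from that source. One small point worth making explicit (you implicitly use it in part (3)): for $y\in X^{*}$ one has $d(\Omega y,y)<\infty$, which follows from $d(\Omega y,y)\le L\,d(y,x_{0})+d(\Omega x_{0},x_{0})+d(x_{0},y)$; this justifies restarting the iteration from $y$. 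Also, the ``$d(y,X^{*})$'' in the paper's statement is evidently a typo for $d(y,x^{*})$, which is how you have read it.
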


%%%%%%%%%%%%%%%%%%%%%%%%%%%%%%%%%%%%%%%%%%%%%%%%%%%%%%%%%%%%%%%%%%%%%%%%%%%%%%%%%%%%%%%%%%%%%%%%%%%%%%%%%%%%%%%%%%%%%%%%%%%%%%%%%%%%%%%%%%%%%%%%%%%%%%%%%%%%%%%%%%%%%%%%%%%%%%%%%%%%%%%%%%%%%%%%%%%%%%%%%%%%%%%%%%%%%%%%%%%%%%%%

\section{Ulam-Hyers-Rassias stability}

By means of the Banach fixed point theorem, in this section we present the
first result of this paper, the Ulam-Hyers-Rassias stability for the delay
fractional differential equation, Eq.(\ref{eq1}).

\begin{teorema}
\label{teo2} Consider the interval $I=\left[  t_{0}-a,T\right]  $ and suppose
that $F:I\times\mathbb{R}\times\mathbb{R}\rightarrow\mathbb{R}$ is a
continuous function with the following Lipschitz condition:
\[
\left\vert F\left(  t,x,y\right)  -F\left(  t,z,w\right)  \right\vert \leq
L_{1}\left\vert x-z\right\vert +L_{2}\left\vert y-w\right\vert
\]
for all $\left(  t,x,y\right)  ,\left(  t,z,w\right)  \in I\times\mathbb{R}
\times\mathbb{R}$.

Let $\phi:I\rightarrow\left(  0,\infty\right)  $ be a continuous function.
Assume that $\Phi\in C_{1-\gamma;\psi}\left[  t_{0}-a,t_{0}\right]  $,
$K,L_{1}$ and $L_{2}$ are positive constants with
\[
0< K\left(  L_{1}+L_{2}\right) <1
\]
and
\[
\left\vert \dfrac{1}{\Gamma\left(  \alpha\right)  }\int_{t_{0}}^{t} N^{\alpha
}_{\psi}(t,u) \phi\left(  u\right)  du\right\vert \leq K\phi\left( t\right)  ,
\]
for all $t\in I=\left[  t_{0}-a,T\right] $.

Then, if a continuous function $y:I\rightarrow\mathbb{R}$ and $\varphi
:I\rightarrow(0,\infty)$ satisfies
\[
\left\{
\begin{array}
[c]{ccc}%
\left\vert ^{H}\mathbb{D}_{t_{0}+}^{\alpha,\beta;\psi}y\left(  t\right)
-F\left(  t,y\left(  t\right)  ,y\left(  t-a\right)  \right)  \right\vert  &
< & \varphi\left(  t\right)  ,\text{ }t\in\left[  t_{0},T\right] \\
\left\vert y\left(  t\right)  -\Phi\left(  t\right)  \right\vert  & < &
\varphi\left(  t\right)  ,\text{ }t\in\left[  t_{0}-a,t_{0}\right]
\end{array}
\right.
\]
then there exists a unique continuous function $y_{0}:I\rightarrow\mathbb{R}$
such that
\begin{equation}
\label{eq3}\left\{
\begin{array}
[c]{cll}%
^{H}\mathbb{D}_{t_{0}+}^{\alpha,\beta;\psi}y_{0}\left(  t\right)  & = &
F\left(  t,y_{0}\left(  t\right)  ,y_{0}\left(  t-a\right)  \right)  ,\text{
}t\in\left[ t_{0},\text{ }T\right] \\
y_{0}\left(  t\right)  & = & \Phi\left(  t\right)  ,\text{ }t\in\left[
t_{0}-a,t_{0}\right]
\end{array}
\right.
\end{equation}
and
\begin{equation}
\label{eq4}\left\vert y\left(  t\right)  -y_{0}\left(  t\right)  \right\vert
\leq\frac{1}{1-K\left(  L_{1}+L_{2}\right)  }K\phi\left(  t\right)  ,\text{
for all }t\in I.
\end{equation}

\end{teorema}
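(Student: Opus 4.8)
The plan is to recast problem \eqref{eq3} as a fixed-point equation and apply the Banach fixed point theorem (Theorem~\ref{teo1}) in a suitably weighted complete metric space. First I would use the equivalence between the $\psi$-Hilfer fractional initial value problem and its integral reformulation: applying $I^{\alpha;\psi}_{t_0+}$ to the equation and using that it inverts $^{H}\mathbb{D}^{\alpha,\beta;\psi}_{t_0+}$, a continuous $y_0$ solves \eqref{eq3} if and only if $y_0(t)=\Phi(t)$ on $[t_0-a,t_0]$ and, for $t\in[t_0,T]$,
\[
y_0(t)=g_\Phi(t)+\frac{1}{\Gamma(\alpha)}\int_{t_0}^{t}N^{\alpha}_{\psi}(t,s)\,F\bigl(s,y_0(s),y_0(s-a)\bigr)\,ds,
\]
where $g_\Phi$ is the term carrying the initial data $\Phi$. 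Thus solving \eqref{eq3} amounts to finding a fixed point of the operator $\Omega$ on $X=C(I,\mathbb{R})$ defined by the right-hand side above (and by $\Phi$ on the history interval).

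Next I would equip $X$ with the generalized (Bielecki-type) metric
\[
d(u,v)=\inf\bigl\{C\in[0,\infty]:\ |u(t)-v(t)|\le C\,\phi(t)\ \text{for all }t\in I\bigr\},
\]
which takes values in $[0,\infty]$, satisfies the three axioms of a generalized metric, and renders $(X,d)$ a generalized complete metric space. The heart of the argument is to show $\Omega$ is strictly contractive. For $u,v\in X$ with $d(u,v)=C$ one has $|u(s)-v(s)|\le C\phi(s)$, while on the history part $u,v$ both equal $\Phi$; using the Lipschitz condition on $F$ and then the standing hypothesis $\bigl|\frac{1}{\Gamma(\alpha)}\int_{t_0}^{t}N^{\alpha}_{\psi}(t,s)\phi(s)\,ds\bigr|\le K\phi(t)$, I would estimate, for $t\in[t_0,T]$,
\[
|(\Omega u)(t)-(\Omega v)(t)|\le \frac{L_1+L_2}{\Gamma(\alpha)}\int_{t_0}^{t}N^{\alpha}_{\psi}(t,s)\,C\phi(s)\,ds\le C\,K(L_1+L_2)\,\phi(t),
\]
giving $d(\Omega u,\Omega v)\le K(L_1+L_2)\,d(u,v)$, so the Lipschitz constant is $L=K(L_1+L_2)<1$ by hypothesis.

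I would then verify the remaining hypothesis of Theorem~\ref{teo1}, namely $d(\Omega y,y)<\infty$ (taking $k=0$): since $y$ satisfies the two defining inequalities with perturbation $\varphi$, subtracting the integral representation of $\Omega y$ from $y$ and propagating the residual $|{}^{H}\mathbb{D}^{\alpha,\beta;\psi}_{t_0+}y-F(\cdot,y,y(\cdot-a))|<\varphi$ through $I^{\alpha;\psi}_{t_0+}$ shows $|y(t)-(\Omega y)(t)|\le C_0\phi(t)$ for a finite $C_0$. Theorem~\ref{teo1} then yields a unique fixed point $y_0$ of $\Omega$, which is the desired solution of \eqref{eq3}, and conclusion (3) of that theorem gives
\[
d(y,y_0)\le \frac{1}{1-L}\,d(\Omega y,y)=\frac{1}{1-K(L_1+L_2)}\,d(\Omega y,y).
\]
Estimating $d(\Omega y,y)\le K$ from the defining inequalities and unwinding the definition of $d$ produces exactly \eqref{eq4}. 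The main obstacle I anticipate is the delayed argument: in the contraction estimate one genuinely controls $|u(s-a)-v(s-a)|$ by $C\phi(s-a)$ rather than by $C\phi(s)$, so making the single stated hypothesis on $\phi$ apply requires either monotonicity of $\phi$, the observation that the delayed difference vanishes on $[t_0,t_0+a]$ where the histories coincide, or an absorption of $\phi(s-a)$ into $\phi(s)$; reconciling this cleanly is the delicate point.
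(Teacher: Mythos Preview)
Your approach coincides with the paper's: the same weighted Bielecki-type metric on the set $S=\{f\in C_{1-\gamma;\psi}(I):f=\Phi\text{ on }[t_0-a,t_0]\}$ (the paper makes this restriction explicit, and you need it too---your remark that ``on the history part $u,v$ both equal $\Phi$'' is only valid in $S$, not in all of $C(I,\mathbb R)$ as you wrote), the same integral operator $\Omega$, the same contraction constant $K(L_1+L_2)$, and the same invocation of Theorem~\ref{teo1}(1)--(3) to produce $y_0$ and the bound \eqref{eq4}. The obstacle you flag at the end is genuine and is simply glossed over in the paper: there the estimate passes from $|\varphi(u-a)-\mu(u-a)|$ directly to $M\phi(u)$ rather than $M\phi(u-a)$ without justification, so your instinct that an additional hypothesis on $\phi$ (e.g.\ monotonicity or $\phi(s-a)\le C\phi(s)$) is needed for a rigorous argument is well placed.
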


\begin{proof}
For the proof of this theorem, first consider the set $S$ given by
\begin{equation*}
S=\left\{ \varphi :I\rightarrow \mathbb{R}:\varphi \in C_{1-\gamma;\psi},\text{ }\varphi \left( t\right) =\Phi \left( t\right) ,\text{
if }t\in \left[ t_{0}-a,t_{0}\right] \right\}
\end{equation*}
and the following generalized metric over $S$
\begin{equation}\label{eq5}
d\left( \varphi ,\mu \right) =\inf \left\{ M\in \left[ 0,\infty \right):\left\vert \varphi \left( t\right) -\mu \left( t\right) \right\vert \leq
M\phi \left( t\right) ,\text{ }\forall t\in I\right\}.
\end{equation}
Note that, $\left( S,d\right) $ is generalized complete metric space. Now, we introduce the following function $\Omega :S\rightarrow S$ given by
\begin{equation}\label{eq6}
\left\{
\begin{array}{cll}
\left( \Omega \varphi \right) \left( t\right) & = & \Phi \left( t\right) ,%
\text{ }t\in \left[ t_{0}-a,t_{0}\right] \\
\left( \Omega \varphi \right) \left( t\right) & = &
\begin{array}{l}
\Phi\left( t_{0}\right) \Psi^{\gamma}(t,t_{0})+
\displaystyle\frac{1}{\Gamma \left( \alpha \right) }\displaystyle\int_{t_{0}}^{t}N^{\alpha}_{\psi}(t,s)F\left( u,\varphi \left( u\right) ,\varphi \left( u-a\right) \right) du,\\
\text{ }t\in \left[ t_{0},T\right],
\end{array}%
\end{array}%
\right.
\end{equation}
where $\Psi^{\gamma}(t,t_{0}):=\dfrac{(\psi(t)-\psi(t_{0}))^{1-\gamma}}{\Gamma(\gamma)}$, with $\gamma=\alpha+\beta(1-\alpha)$.
Note that, for $\varphi\in S$, the function $\Omega\varphi$ is continuous. In this way, we can write $\Omega \varphi \in S$.
Let $\varphi ,\mu \in S$ and by Eq.(\ref{eq6}), we have
\begin{eqnarray*}
&&\left\vert \left( \Omega \varphi \right) \left( t\right) -\left( \Omega \mu \right) \left( t\right) \right\vert  \notag \\ &\leq &\left\vert \frac{1}{\Gamma \left(
\alpha \right) }\int_{t_{0}}^{t}N^{\alpha}_{\psi}(t,u)\left( F\left(
u,\varphi \left( u\right) ,\varphi \left( u-a\right) \right) -F\left( u,\mu \left( u\right) ,\mu \left( u-a\right) \right) \right) du\right\vert  \notag \\
&\leq &\frac{1}{\Gamma \left( \alpha \right) }\int_{t_{0}}^{t}N^{\alpha}_{\psi}(t,u)\left( L_{1}\left\vert \varphi \left( u\right) -\mu \left( u\right) \right\vert -L_{2}\left\vert \varphi \left( u-a\right) -\mu \left(
u-a\right) \right\vert \right) du  \notag \\ &\leq &\frac{ML_{1}}{\Gamma \left( \alpha \right) }\int_{t_{0}}^{t}N^{\alpha}_{\psi}(t,u)\phi \left( u\right) du+\frac{ML_{2}}{\Gamma \left(
\alpha \right) }\int_{t_{0}}^{t}N^{\alpha}_{\psi}(t,u)\phi \left(
u\right) du  \notag \\
&\leq &\frac{M\left( L_{1}+L_{2}\right) }{\Gamma \left( \alpha \right) }\left\vert \int_{t_{0}}^{t}N^{\alpha}_{\psi}(t,u)\phi \left( u\right) du\right\vert  \notag \\
&\leq &MK\left( L_{1}+L_{2}\right) \phi \left( t\right) ,\text{ }t\in \left[t_{0},T\right]
\end{eqnarray*}
and
\begin{equation*}
\left\vert \left( \Omega \varphi \right) \left( t\right) -\left( \Omega \mu \right) \left( t\right) \right\vert =\Phi \left( t\right) -\Phi \left(
t\right) =0,\text{ }t\in \left[ t_{0}-a,t\right]
\end{equation*}
which implies that $d\left( \Omega \varphi -\Omega \mu \right) \leq K\left( L_{1}+L_{2}\right) d\left( \varphi ,\mu \right) .$ Since $0<K\left( L_{1}+L_{2}\right) <1$, then $\Omega $ is strictly contractive on $S$.
Let $\xi \in S$ arbitrary and $\underset{t\in I}{\min }\text{ }\phi \left( t\right) >0$. As $F\left( t,\xi \left( t\right) ,\xi \left(t-a\right) \right) $ and $\xi \left( t\right) $ are bounded on $I$, then exists a constant $0<M<\infty $ such that
\begin{eqnarray}\label{eq7}
&&\left\vert \left( \Omega \xi \right) \left( t\right) -\xi \left( t\right) \right\vert  \notag \\&=&\left\vert \Psi^{\gamma}(t,t_{0}) \Phi \left( t_{0}\right) +\frac{1}{\Gamma \left( \alpha \right) }\int_{t_{0}}^{t}N^{\alpha}_{\psi}(t,u)F\left( u,\xi \left( u\right) ,\xi \left( u-a\right)
\right) du-\xi \left( t\right) \right\vert  \notag \\
&\leq &M\varphi \left( t\right).
\end{eqnarray}
Thus, by means of Eq.(\ref{eq7}), it follows that $d\left( \Omega \xi ,\xi \right) <\infty $. By means of the Theorem \ref{teo1} (1), there exists a continuous function $
y_{0}:I\rightarrow \mathbb{R}$ such that $\Omega ^{n}\xi \rightarrow y_{0}$ in $\left( S,d\right) $ and $\Omega y_{0}=y_{0}$, then $y_{0}$ satisfies
\begin{equation*}
\left\{
\begin{array}{cll}
^{H}\mathbb{D}_{t_{0}+}^{\alpha ,\beta ;\psi }y_{0}\left( t\right) & = & F\left( t,y_{0}\left( t\right) ,y_{0}\left( t-a\right) \right) ,\text{ }t\in \left[ t_{0},T\right] \\
y_{0}\left( t\right) & = & \Phi \left( t\right),\text{ }t\in \left[ t_{0}-a,t_{0}\right].
\end{array}
\right.
\end{equation*}
Now consider for any $g\in S$, such that $g$ and $\xi$ are bounded on $I$, then exist a constant $0<M_{g}<\infty $ such that
\begin{equation*}
\left\vert \xi \left( t\right) -g\left( t\right) \right\vert \leq M_{g}\varphi \left( t\right)
\end{equation*}
for $t\in I$.
Thus, we can write $\forall g\in S$, $d\left( \xi ,g\right) <\infty $ with $S=\left\{ g\in S;d\left( \xi ,g\right) <\infty \right\}$. Furthermore, it is clear that
\begin{equation}\label{eq8}
-\phi \left( t\right)\leq \text{ } ^{H}\mathbb{D}_{t_{0}+}^{\alpha ,\beta ;\psi }y\left(t\right) -F\left( t,y\left( t\right) ,y\left( t-a\right) \right) \leq \phi
\left( t\right) ,\text{ }\forall t\in \left[ t_{0},T\right].
\end{equation}
Applying the fractional integral $I_{t_{0}}^{\alpha ;\psi }\left( \cdot\right) $ on both sides of Eq.(\ref{eq8}), we have
\begin{eqnarray*}
&&\left\vert y\left( t\right) -\Psi^{\gamma}(t,t_{0})\Phi
\left( t_{0}\right) -\frac{1}{\Gamma \left( \alpha \right) }\int_{t_{0}}^{t}N^{\alpha}_{\psi}(t,u) F\left( u,y\left( u\right) ,y\left( u-a\right) \right) du\right\vert  \notag \\
&\leq &\left\vert \frac{1}{\Gamma \left( \alpha \right) }\int_{t_{0}}^{t}N^{\alpha}_{\psi}(t,u)\phi \left( u\right) du\right\vert \leq K\phi\left( t\right) ,\text{ }t\in \left[ t_{0},T\right] .
\end{eqnarray*}
This form, by definition $\Omega $, finishes
\begin{equation*}
\left\vert y\left( t\right) -\left( \Omega y\right) \left( t\right)\right\vert \leq K\phi \left( t\right) ,\text{ }t\in I.
\end{equation*}
Consequently, it implies that $\ d\left( y,\Omega y\right) \leq K$. By means of Theorem \ref{teo1} (3) and the last estimative, we have
\begin{equation*}
d\left( y,y_{0}\right) \leq \frac{1}{1-K\left( L_{1}+L_{2}\right) }d\left( \Omega y,y\right) \leq \frac{K\phi \left( t\right) }{1-K\left(
L_{1}+L_{2}\right) },\text{ }\forall t\in I.
\end{equation*}
Thus, by Theorem \ref{teo1} (2), we conclude that there exists $y_{0}$, the unique continuous function with the property Eq.(\ref{eq3}).
\end{proof}

\begin{remark}
One of the advantages of working with Ulam-Hyers and Ulam-Hyers-Rassias
stabilities, or any other type of stability with the global fractional
differential operator so-called $\psi$-Hilfer, is that the results obtained in
this way, are also valid for their respective individual cases.
\end{remark}

%%%%%%%%%%%%%%%%%%%%%%%%%%%%%%%%%%%%%%%%%%%%%%%%%%%%%%%%%%%%%%%%%%%%%%%%%%%%%%%%%%%%%%%%%%%%%%%%%%%%%%%%%%%%%%%%%%%%%%%%%%%%%%%%%%%%%%%%%%%%%%%%%%%%%%

\section{Ulam-Hyers stability}

In this section, we investigate the second main result of the paper, the
Ulam-Hyers stability, again making use of the Banach fixed point theorem.

\begin{teorema}
\label{teo3} Suppose that $F:I\times\mathbb{R}\times\mathbb{R}\rightarrow
\mathbb{R}$ is a continuous function with the following Lipschitz condition
\[
\left\vert F\left(  t,x,y\right)  -F\left(  t,z,w\right)  \right\vert \leq
L_{1}\left\vert x-z\right\vert +L_{2}\left\vert y-w\right\vert ,
\]
where $\left(  t,x,y\right)  ,\left(  t,z,w\right)  \in I\times\mathbb{R}%
\times\mathbb{R}$ and $0<\dfrac{\left(  \psi\left(  T\right)  \right)
^{\alpha}\left(  L_{1}+L_{2}\right)  }{\Gamma\left(  \alpha+1\right)  }<1$.

Let $\Phi\in C_{1-\gamma;\psi}\left[  t_{0}-a,t\right]  $ and $\varepsilon
\geq0.$ If a continuous function $y:I\rightarrow\mathbb{R}$ satisfies
\[
\left\{
\begin{array}
[c]{cll}%
\left\vert ^{H}\mathbb{D}_{t_{0}+}^{\alpha,\beta;\psi}y\left(  t\right)
-F\left(  t,y\left(  t\right)  ,y\left(  t-a\right)  \right)  \right\vert  &
< & \varepsilon,\text{ }t\in\left[  t_{0},T\right] \\
\left\vert y\left(  t\right)  -\Phi\left(  t\right)  \right\vert  & < &
\varepsilon,\text{ }t\in\left[  t_{0}-a,t_{0}\right]
\end{array}
\right.
\]
then there exists a unique continuous function $y_{0}:I\rightarrow\mathbb{R}$
such that
\begin{equation}
\label{eq9}\left\{
\begin{array}
[c]{cll}%
^{H}\mathbb{D}_{t_{0}+}^{\alpha,\beta;\psi}y_{0}\left(  t\right)  & = &
F\left(  t,y_{0}\left(  t\right)  ,y_{0}\left(  t-a\right)  \right)  ,\text{
}t\in\left[ t_{0},T\right] \\
y_{0}\left(  t\right)  & = & \Phi\left(  t\right)  ,\text{ }t\in\left[
t_{0}-a,t_{0}\right]
\end{array}
\right.
\end{equation}
and
\begin{equation}
\label{eq10}\left\vert y\left(  t\right)  -y_{0}\left(  t\right)  \right\vert
\leq\frac{\varepsilon\left(  \psi\left(  T\right)  \right)  ^{\alpha}}%
{\Gamma\left(  \alpha+1\right)  -\left(  \psi\left(  T\right)  \right)
^{\alpha}\left( L_{1}+L_{2}\right)  },\text{ }\forall t\in I.
\end{equation}

\end{teorema}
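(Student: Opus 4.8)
The plan is to mirror the fixed-point argument used for Theorem \ref{teo2}, replacing the $\phi$-weighted metric by the uniform metric so that the constant $\varepsilon$ plays the role of the control function. First I would introduce the set
\begin{equation*}
S = \left\{ f : I \to \mathbb{R} : f \in C_{1-\gamma;\psi},\ f(t) = \Phi(t) \text{ if } t \in [t_0-a,t_0] \right\}
\end{equation*}
equipped with the generalized metric $d(f,g) = \sup_{t \in I} |f(t) - g(t)|$ (that is, Eq.(\ref{eq5}) with $\phi \equiv 1$), under which $(S,d)$ is a generalized complete metric space, and define the operator $\Omega : S \to S$ exactly as in Eq.(\ref{eq6}): $(\Omega\varphi)(t) = \Phi(t)$ on $[t_0-a,t_0]$ and $(\Omega\varphi)(t) = \Psi^{\gamma}(t,t_0)\Phi(t_0) + \frac{1}{\Gamma(\alpha)}\int_{t_0}^{t} N^{\alpha}_{\psi}(t,u) F(u,\varphi(u),\varphi(u-a))\,du$ on $[t_0,T]$. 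As before, continuity of $F$ and $\varphi$ guarantees $\Omega\varphi \in S$.

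The key new ingredient, which here replaces the standing hypothesis on the weighted kernel integral in Theorem \ref{teo2}, is the explicit evaluation
\begin{equation*}
\frac{1}{\Gamma(\alpha)}\int_{t_0}^{t} N^{\alpha}_{\psi}(t,u)\,du = \frac{(\psi(t)-\psi(t_0))^{\alpha}}{\Gamma(\alpha+1)} \leq \frac{(\psi(T))^{\alpha}}{\Gamma(\alpha+1)}, \qquad t \in [t_0,T].
\end{equation*}
Combining this with the Lipschitz condition on $F$ and the definition of $d$, I would estimate, for $\varphi,\mu \in S$ and $t \in [t_0,T]$,
\begin{equation*}
|(\Omega\varphi)(t) - (\Omega\mu)(t)| \leq \frac{L_1+L_2}{\Gamma(\alpha)} \left( \int_{t_0}^{t} N^{\alpha}_{\psi}(t,u)\,du \right) d(\varphi,\mu) \leq \frac{(\psi(T))^{\alpha}(L_1+L_2)}{\Gamma(\alpha+1)}\, d(\varphi,\mu),
\end{equation*}
while the two functions agree on $[t_0-a,t_0]$ and contribute zero. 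This yields $d(\Omega\varphi,\Omega\mu) \leq L\, d(\varphi,\mu)$ with $L = \frac{(\psi(T))^{\alpha}(L_1+L_2)}{\Gamma(\alpha+1)} < 1$, so $\Omega$ is strictly contractive. Boundedness of $F$ and of any fixed $\xi \in S$ then gives $d(\Omega\xi,\xi) < \infty$, so Theorem \ref{teo1}(1)--(2) produces a unique continuous fixed point $y_0$, which by construction satisfies Eq.(\ref{eq9}).

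For the error estimate, I would start from $|{}^{H}\mathbb{D}_{t_0+}^{\alpha,\beta;\psi}y(t) - F(t,y(t),y(t-a))| < \varepsilon$, apply $I_{t_0+}^{\alpha;\psi}(\cdot)$ to both sides, and use the same kernel integral to obtain
\begin{equation*}
|y(t) - (\Omega y)(t)| \leq \frac{\varepsilon}{\Gamma(\alpha)}\int_{t_0}^{t} N^{\alpha}_{\psi}(t,u)\,du \leq \frac{\varepsilon(\psi(T))^{\alpha}}{\Gamma(\alpha+1)},
\end{equation*}
hence $d(y,\Omega y) \leq \frac{\varepsilon(\psi(T))^{\alpha}}{\Gamma(\alpha+1)}$. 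Finally, Theorem \ref{teo1}(3) gives $d(y,y_0) \leq \frac{1}{1-L}\,d(\Omega y,y)$, and substituting the value of $L$ together with this bound and clearing the common denominator $\Gamma(\alpha+1)$ yields precisely Eq.(\ref{eq10}). The main obstacle is bookkeeping rather than conceptual: I must handle the kernel integral uniformly in $t$ so that the bound $(\psi(T))^{\alpha}$ is legitimate on all of $[t_0,T]$, and ensure the initial-segment agreement on $[t_0-a,t_0]$ genuinely contributes zero to $d$, so that the single contraction estimate controls the whole interval $I$.
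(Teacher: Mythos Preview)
Your proposal is correct and follows essentially the same route as the paper: the same set $S$ and operator $\Omega$ from Eq.(\ref{eq6}), the uniform (unweighted) generalized metric in place of the $\phi$-weighted one, the explicit kernel bound $\frac{1}{\Gamma(\alpha)}\int_{t_0}^{t} N^{\alpha}_{\psi}(t,u)\,du \le \frac{(\psi(T))^{\alpha}}{\Gamma(\alpha+1)}$ to obtain the contraction constant, and then Theorem~\ref{teo1}(1)--(3) to produce $y_0$ and the estimate Eq.(\ref{eq10}). The only cosmetic difference is that the paper writes the metric as $d_1(\varphi,\mu)=\inf\{M:\lvert\varphi(t)-\mu(t)\rvert\le M\}$ rather than a supremum, which is equivalent.
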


\begin{proof}
For the proof, we consider the following generalized metric over $S$, given by
\begin{equation*}
d_{1}\left( \varphi ,\mu \right) =\inf \left\{ M\in \left[ 0,\infty \right] :\left\vert \varphi \left( t\right) -\mu \left( t\right) \right\vert \leq M,
\text{ }\forall t\in I\right\}.
\end{equation*}
Note that, $\left( S,d_{1}\right) $ is a generalized complete metric space.
For any $\varphi ,\mu \in S$ and $M_{\varphi ,\mu }\in \left\{ M\in \left[
0,\infty \right] :\left\vert \varphi \left( t\right) -\mu \left( t\right) \right\vert \leq M,\text{ }\forall t\in I\right\}$, using Eq.(\ref{eq6}), we obtain
\begin{eqnarray*}
&&\left\vert \left( \Omega \varphi \right) \left( t\right) -\left( \Omega \mu \right) \left( t\right) \right\vert  \notag \\
&=&\left\vert
\begin{array}{c}
\dfrac{1}{\Gamma \left( \alpha \right) }\displaystyle\int_{t_{0}}^{t}N^{\alpha}_{\psi}(t,u) F\left( u,\phi \left( u\right) ,\phi \left( u-a\right) \right) du \\
-\dfrac{1}{\Gamma \left( \alpha \right) }\displaystyle\int_{t_{0}}^{t}N^{\alpha}_{\psi}(t,u)F\left( u,\mu \left( u\right) ,\mu \left( u-a\right) \right) du
\end{array}%
\right\vert  \notag \\
&\leq &\frac{1}{\Gamma \left( \alpha \right) }\int_{t_{0}}^{t}N^{\alpha}_{\psi}(t,u) \left( L_{1}\left\vert \phi \left( u\right) -\mu \left( u\right) \right\vert +L_{2}\left\vert \phi \left( u-a\right) -\mu \left( u-a\right)
\right\vert \right) du  \notag \\
&\leq &\frac{L_{1}M_{\varphi ,\mu }}{\Gamma \left( \alpha \right) }\int_{t_{0}}^{t}N^{\alpha}_{\psi}(t,u)du+\frac{L_{2}M_{\varphi ,\mu }}{\Gamma \left( \alpha \right) }\int_{t_{0}}^{t}N^{\alpha}_{\psi}(t,u)du \notag \\
&\leq &\frac{\left( L_{1}+L_{2}\right) M_{\varphi ,\mu }}{\Gamma \left(\alpha +1\right) }\left( \psi \left( T\right) \right) ^{\alpha }
\end{eqnarray*}
and
\begin{equation*}
\left\vert \left( \Omega \varphi \right) \left( t\right) -\left( \Omega \mu \right) \left( t\right) \right\vert =\Phi \left( t\right) -\Phi \left(
t\right) =0,\text{ }\forall t\in \left[ t_{0}-a,t_{0}\right]
\end{equation*}
which imply that $d_{1}\left( \Omega \varphi ,\Omega \mu \right) \leq \dfrac{\left( L_{1}+L_{2}\right) \left( \psi \left( T\right) \right) ^{\alpha }}{\Gamma \left( \alpha +1\right) }d\left( \varphi ,\mu \right)$.
Since $0<\dfrac{\left( \psi \left( T\right) \right) ^{\alpha }\left( L_{1}+L_{2}\right) }{\Gamma \left( \alpha +1\right) }<1,$ then $\Omega $ is
strictly contractive on $S$.
Now, for an arbitrary $\delta \in S$ and using the fact that $F\left( t,\delta \left( t\right) ,\delta \left( t-a\right) \right) $ and $\delta \left( t\right)$, are boundedness on $S$, we can show $d_{1}\left( \Omega \delta ,\delta \right) <\infty $. Hence, from Theorem \ref{teo1} (1), there exists a continuous function $y_{0}:I\rightarrow
\mathbb{R}$ such that $\Omega ^{n}\xi \rightarrow y_{0}$ in $\left( S,d_{1}\right) $ and $\Omega y_{0}=y_{0},$ then $y_{0}$ satisfies
\begin{equation*}
\left\{
\begin{array}{cll}
^{H}\mathbb{D}_{t_{0}+}^{\alpha ,\beta ;\psi }y_{0}\left( t\right) & = & F\left(t,y_{0}\left( t\right) ,y_{0}\left( t-a\right) \right) ,\text{ }t\in \left[ t_{0},T\right] \\
y_{0}\left( t\right) & = & \Phi \left( t\right) ,\text{ }t\in \left[t_{0}-a,t_{0}\right].
\end{array}
\right.
\end{equation*}
Using the fact that $g$ and $\delta$ are bounded on $I$, then $d_{1}\left( \delta ,g\right) <\infty $, $\forall g\in S$, with $S=\left\{ d_{1}\left( \delta,y\right) <\infty \right\}$.
Then, using Theorem \ref{teo1} (2), $y_{0}$ is the unique continuous function with the property Eq.(\ref{eq9}).
Note that,
\begin{equation}\label{eq11}
-\varepsilon \leq \text{ } ^{H}\mathbb{D}_{t_{0}+}^{\alpha ,\beta ;\psi }y\left( t\right)-F\left( t,y\left( t\right) ,y\left( t-a\right) \right) \leq \varepsilon
\end{equation}
for all $t\in \left[ t_{0},T\right]$.
Applying the fractional integral $I_{t_{0}}^{\alpha ;\psi }\left( \cdot \right) $, on both sides of Eq.(\ref{eq11}), we get
\begin{eqnarray}\label{eq12}
&&\left\vert y\left( t\right) -\Psi^{\gamma}(t,t_{0})-\frac{1}{\Gamma \left( \alpha \right) }\int_{t_{0}}^{t}N^{\alpha}_{\psi}(t,u)F\left( u,y\left( u\right) ,y\left( u-a\right) \right) du\right\vert \notag \\
&\leq &\varepsilon I_{t_{0}}^{\alpha ;\psi }\left( 1\right) \leq \frac{\varepsilon \left( \psi \left( T\right) \right) ^{\alpha }}{\Gamma \left(
\alpha +1\right) }
\end{eqnarray}
for each $t\in I$.
By means of Theorem \ref{teo1} (3) and Eq.(\ref{eq12}), we get
\begin{eqnarray*}
d_{1}\left( y,y_{0}\right)  &\leq &\frac{\varepsilon \left( \psi \left( T\right) \right) ^{\alpha }}{\Gamma \left( \alpha +1\right) \left( 1-\frac{%
\left( \psi \left( T\right) \right) ^{\alpha }\left( L_{1}+L_{2}\right) }{\Gamma \left( \alpha +1\right) }\right) }  \notag \\
&=&\frac{\varepsilon \left( \psi \left( T\right) \right) ^{\alpha }}{\Gamma \left( \alpha +1\right) -\left( \psi \left( T\right) \right) ^{\alpha
}\left( L_{1}+L_{2}\right) },
\end{eqnarray*}
which we conclude the proof.
\end{proof}

\begin{corolario}
\label{cor1} Suppose the conditions of the \textrm{Theorem \ref{teo2}}. If a
continuous function $y:I\rightarrow\mathbb{R}$ satisfies
\begin{equation}
\left\{
\begin{array}
[c]{cll}%
\left\vert \mathcal{D}_{t_{0}+}^{\alpha}y\left(  t\right)  -F\left(
t,y\left(  t\right) ,y\left(  t-a\right)  \right)  \right\vert  & < &
\varepsilon,\text{ }t\in\left[  t_{0},T\right] \\
\left\vert y\left(  t\right)  -\Phi\left(  t\right)  \right\vert  & < &
\varepsilon,\text{ }t\in\left[  t_{0}-a,t_{0}\right]
\end{array}
\right.
\end{equation}
then there exists a unique continuous function $y_{0}:I\rightarrow\mathbb{R}$
such that
\begin{equation}
\left\{
\begin{array}
[c]{cll}%
\mathcal{D}_{t_{0}+}^{\alpha}y_{0}\left(  t\right)  & = & F\left(
t,y_{0}\left(  t\right)  ,y_{0}\left(  t-a\right)  \right)  ,\text{ }%
t\in\left[  t_{0},T\right] \\
y_{0}\left(  t\right)  & = & \Phi\left(  t\right)  ,\text{ }t\in\left[
t_{0}-a,t_{0}\right]
\end{array}
\right.
\end{equation}
and
\begin{equation}
\left\vert y\left(  t\right)  -y_{0}\left(  t\right)  \right\vert \leq
\frac{\varepsilon\left(  \ln T\right)  ^{\alpha}}{\Gamma\left(  \alpha
+1\right)  -\left(  \ln T\right)  ^{\alpha}\left(  L_{1}+L_{2}\right)
},\text{ }\forall t\in I,
\end{equation}
where $\mathcal{D}_{t_{0}+}(\cdot)$ is the Hadamard fractional derivative.
\end{corolario}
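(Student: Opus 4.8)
The plan is to derive this corollary as an immediate specialization of the Ulam--Hyers stability theorem just proved, choosing the auxiliary function $\psi$ so that the $\psi$-Hilfer operator degenerates into the Hadamard one. Concretely, I would take $\psi(t)=\ln t$. The first step is to check that this choice meets the standing requirements on $\psi$ from Section 2: on the interval $I$ the map $t\mapsto\ln t$ is increasing and monotone and has the continuous, nonvanishing derivative $\psi'(t)=1/t$. Consequently the $\psi$-Riemann--Liouville integral \eqref{eq2} and the $\psi$-Hilfer derivative are well defined for this $\psi$, and Theorem \ref{teo3} applies verbatim.

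The second step is to identify the resulting operators with their classical Hadamard counterparts. With $\psi(t)=\ln t$ the kernel becomes
\[
N_\psi^\alpha(t,s)=\psi'(s)\,(\psi(t)-\psi(s))^{\alpha-1}=\frac{1}{s}\left(\ln\frac{t}{s}\right)^{\alpha-1},
\]
so that $I_{t_0+}^{\alpha;\psi}$ is precisely the Hadamard fractional integral and $^{H}\mathbb{D}_{t_0+}^{\alpha,\beta;\psi}$ collapses to the Hadamard fractional derivative $\mathcal{D}_{t_0+}^{\alpha}$. Under this identification the two perturbed inequalities and the fixed-point system of Theorem \ref{teo3} coincide with those displayed in the statement of the corollary.

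The third step is a plain substitution in the conclusion. Since $\psi(T)=\ln T$, the contraction hypothesis $0<\frac{(\psi(T))^{\alpha}(L_1+L_2)}{\Gamma(\alpha+1)}<1$ becomes $0<\frac{(\ln T)^{\alpha}(L_1+L_2)}{\Gamma(\alpha+1)}<1$, which is the condition in force here, and the estimate \eqref{eq10} turns into
\[
\left\vert y(t)-y_0(t)\right\vert\leq\frac{\varepsilon\,(\ln T)^{\alpha}}{\Gamma(\alpha+1)-(\ln T)^{\alpha}(L_1+L_2)},\qquad t\in I,
\]
which is the asserted bound, together with the existence and uniqueness of $y_0$ inherited from Theorem \ref{teo3}.

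I do not expect any genuine analytic obstacle: the substance of the corollary is exactly the principle recorded in the Remark after Theorem \ref{teo2}, that the global $\psi$-Hilfer operator recovers its special cases. The only point deserving attention is the domain, since $\ln t$ is positive only for $t>1$; to respect the positivity of $\psi$ one should take the interval inside $(1,\infty)$ (equivalently assume $t_0>1$). The reduction of the operators and the form of the estimate are otherwise untouched, and $(\ln T)^{\alpha}$ is well defined and positive as soon as $T>1$.
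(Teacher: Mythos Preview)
Your proposal is correct and follows the same route as the paper: the corollary is obtained by specializing the Ulam--Hyers theorem via the choice $\psi(t)=\ln t$, which turns the $\psi$-Hilfer operator into the Hadamard one and the bound $(\psi(T))^{\alpha}$ into $(\ln T)^{\alpha}$. The paper's own proof is a single line (``direct consequence of the Theorem~\ref{teo2}''), which appears to be a misprint for Theorem~\ref{teo3}; you correctly invoke Theorem~\ref{teo3}, and your extra care about the admissibility of $\psi$ and the domain restriction $t_0>1$ only makes the argument cleaner.
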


\begin{proof}
The proof is a direct consequence of the Theorem \ref{teo2}.
\end{proof}

\begin{corolario}
\label{cor2} Suppose the conditions of the \textrm{Theorem \ref{teo2}}. If a
continuous function $y:I\rightarrow\mathbb{R}$ satisfies
\begin{equation}
\left\{
\begin{array}
[c]{cll}%
\left\vert y^{\prime}\left(  t\right)  -F\left(  t,y\left(  t\right) ,y\left(
t-a\right)  \right)  \right\vert  & < & \varepsilon,\text{ }t\in\left[
t_{0},T\right] \\
\left\vert y\left(  t\right)  -\Phi\left(  t\right)  \right\vert  & < &
\varepsilon,\text{ }t\in\left[  t_{0}-a,t_{0}\right]
\end{array}
\right.
\end{equation}
then there exists a unique continuous function $y_{0}:I\rightarrow\mathbb{R}$
such that
\begin{equation}
\left\{
\begin{array}
[c]{cll}%
y^{\prime}_{0}\left(  t\right)  & = & F\left(  t,y_{0}\left(  t\right)
,y_{0}\left(  t-a\right)  \right)  ,\text{ }t\in\left[  t_{0},T\right] \\
y_{0}\left(  t\right)  & = & \Phi\left(  t\right)  ,\text{ }t\in\left[
t_{0}-a,t_{0}\right]
\end{array}
\right.
\end{equation}
and
\begin{equation}
\left\vert y\left(  t\right)  -y_{0}\left(  t\right)  \right\vert \leq
\frac{\varepsilon T}{1- T\left(  L_{1}+L_{2}\right)  },\text{ }\forall t\in I.
\end{equation}

\end{corolario}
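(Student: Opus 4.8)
The plan is to derive this corollary as the classical first-order specialization of the Ulam-Hyers stability theorem (Theorem \ref{teo3}), obtained by fixing $\psi(t)=t$ and $\alpha=1$. Under these choices the $\psi$-Hilfer operator degenerates into the ordinary derivative and the weighted fixed point argument of Theorem \ref{teo3} applies without change; the asserted estimate then follows by evaluating the general bound \eqref{eq10} at these parameter values.

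First I would verify the reduction of the differential operator. Since $0<\alpha\le1$ forces $n=1$, at $\alpha=1$ we have $n-\alpha=0$, so both integrals $I_{t_0+}^{\beta(n-\alpha);\psi}$ and $I_{t_0+}^{(1-\beta)(n-\alpha);\psi}$ in the definition of $^{H}\mathbb{D}_{t_0+}^{\alpha,\beta;\psi}$ collapse to the identity $I_{t_0+}^{0;\psi}$. Choosing in addition $\psi(t)=t$, hence $\psi'(t)=1$, leaves only the factor $\tfrac{1}{\psi'(t)}\tfrac{d}{dt}=\tfrac{d}{dt}$, so that $^{H}\mathbb{D}_{t_0+}^{1,\beta;t}y(t)=y'(t)$, exactly the operator appearing in the hypotheses and conclusion of the corollary. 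I would also record that $\gamma=\alpha+\beta(1-\alpha)=1$ at $\alpha=1$, so the exponent $1-\gamma$ vanishes: the factor $\Psi^{\gamma}(t,t_0)$ equals $1$ and the weighted space $C_{1-\gamma;\psi}$ reduces to the ordinary space of continuous functions, whence the history condition on $[t_0-a,t_0]$ becomes the pointwise identity $y_0=\Phi$.

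With the operator reduced, the remaining steps are purely arithmetic. The contraction hypothesis $0<\tfrac{(\psi(T))^{\alpha}(L_1+L_2)}{\Gamma(\alpha+1)}<1$ of Theorem \ref{teo3} becomes $0<T(L_1+L_2)<1$ upon substituting $\psi(T)=T$, $\alpha=1$ and $\Gamma(2)=1$, which is precisely the stated smallness condition. The same substitution applied to \eqref{eq10} turns its right-hand side into $\dfrac{\varepsilon T}{1-T(L_1+L_2)}$, the asserted estimate. Since every hypothesis of Theorem \ref{teo3} holds verbatim after these substitutions, its conclusion---existence of a unique continuous $y_0$ solving the first-order delay initial-value problem together with the displayed inequality---transfers directly.

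The one point that is not a mere substitution is justifying the limiting behavior at the endpoint $\alpha=1$: one must confirm that the degenerate operator $I_{t_0+}^{0;\psi}$ genuinely acts as the identity on the relevant function space, and that letting $\gamma\to1$ introduces no boundary singularity in the weighted norm of $C_{1-\gamma;\psi}$. Once this degeneration is verified, the corollary is immediate from Theorem \ref{teo3}.
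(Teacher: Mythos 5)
Your proposal is correct, and it is in spirit the same one--line specialization argument the paper intends; the only substantive difference is which theorem you specialize. The paper's proof says the corollary is ``a direct consequence of Theorem \ref{teo2}'', but the corollary's hypotheses use a constant $\varepsilon$ rather than a function $\varphi(t)$, and its bound $\varepsilon T/(1-T(L_1+L_2))$ is exactly the right--hand side of Eq.~(\ref{eq10}) evaluated at $\psi(t)=t$, $\alpha=1$, $\Gamma(2)=1$ --- i.e.\ it comes from Theorem \ref{teo3}, not Theorem \ref{teo2} (whose conclusion involves $K\phi(t)$ and would give a different-looking estimate). So you have traced the corollary to its actual source, and your verification that the $\psi$-Hilfer operator degenerates to $d/dt$ (via $n-\alpha=0$ so the two fractional integrals become the identity, and $\psi'\equiv 1$), that $\gamma=1$ kills the weight and makes $\Psi^{\gamma}(t,t_0)=1$, and that the contraction condition becomes $0<T(L_1+L_2)<1$, supplies precisely the details the paper omits. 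Your closing caution about the endpoint $\alpha=1$ is reasonable but not a gap: $I^{0;\psi}$ acting as the identity and $C_{0;\psi}=C$ are immediate from the definitions quoted in Section 2, so no limiting argument is actually needed. If anything, you might also note (as the paper does not) that the constant should really involve $\psi(T)-\psi(t_0)=T-t_0$ rather than $T$; this discrepancy is inherited from the statement of Theorem \ref{teo3} and is not something your argument can repair.
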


\begin{proof}
The proof is a direct consequence of the Theorem \ref{teo2}.
\end{proof}

\begin{remark}
The following fractional differential equation
\begin{equation}
\label{eq49}^{H}\mathbb{D}_{t_{0}+}^{\alpha,\beta;\psi}y\left(  t\right) =
F\left(  t,y\left(  t\right)  \right)
\end{equation}
is a special case of \textrm{Eq.(\ref{eq1})}. Consequently, the results
proposed here are also valid for \textrm{Eq.(\ref{eq49})}.

Applying the limit $\alpha\rightarrow1$ on both sides of the
\textrm{Eq.(\ref{eq49})}, we obtain the following differential equation of the
first order \textrm{\cite{jung}}
\[
y^{\prime}\left(  t\right) = F\left(  t,y\left(  t\right)  \right)  ,
\]
which, in turn, the results proposed here, are also valid.
\end{remark}

%%%%%%%%%%%%%%%%%%%%%%%%%%%%%%%%%%%%%%%%%%%%%%%%%%%%%%%%%%%%%%%%%%%%%%%%%%%%%%%%%%%%%%%%%%%%%%%%%%%%%%%%%%%%%%%%%%%%%%%%%%%%%%%%%%%%%%%%%%%%%%%%%%%%%%%%%%%%%%%%%%%%%%%%%%%%%%%%%%%%%%%%%%%%%%%%%%%%%%%%%%%%%%%%%%%%%%%%%%%%%%%%

\section{Concluding Remarks}

The study of Ulam-Hyers-type stability of solutions of the fractional functional differential equations has been the object of much study and investigated by many researchers \cite{exis1,exis2,exis3,cont1,cont2,cont3,ist1,ist2,ist3}. Although it is yet a field of mathematics that is in expansion, over the years countless works have been published and others are yet to come. In this sense, the paper presented a discuss on the Ulam-Hyers and Ulam-Hyers-Rassias stabilities of the fractional functional differential equation Eq.(\ref{eq1}) through the Banach fixed point theorem, which contributes to the growth of this area.

From this contribution, the natural question that arises will be whether by means of the $\psi$-Hilfer fractional derivative it is also possible to obtain the stabilities investigated here in the function space $L_{p,\alpha
}(I,\mathbb{R})$? And using another fixed point theorem? Another possibility of study is to investigate other types of stabilities such as $\delta $-Ulam-Hyers-Rassias, semi-Ulam-Hyers-Rassias and Mittag-Leffler-Ulam using
the same fractional differentiable operator \cite{sousa,sousa1,mittag}. Studies in this direction are being prepared and will be published in the near future.

\bibliography{ref}
\bibliographystyle{plain}

\end{document}